\numberwithin{equation}{section}
\theoremstyle{plain}
\newtheorem{theorem}{Теорема}
\newtheorem{lemma}{Лемма}[section]
\newtheorem{propos}{Предложение}
\newtheorem{corollary}{Следствие}
\theoremstyle{definition}
\newtheorem{proof}{Доказательство}
\newtheorem{remark}{Замечание}
\begin{document}

\title{Экспоненциальный рост коразмерностей тождеств алгебр с единицей}
\author[M.\,V.~Zaicev]{М.\,В.~Зайцев}
\address{МГУ имени~М.\,В.~Ломоносова}
\email{zaicevmv@mail.ru}
%второй автор
\author[D.~D.~Repov\v s]{Д.~Д.~Реповш}
\address{Университет Любляны, Словения}
\email{dusan.repovs@guest.arnes.si}

%\date{21.08.2014}
%\udk{512.572}

\udk{}

\maketitle

\begin{fulltext}

\begin{abstract}
В работе изучается асимптотическое поведение экспоненциально ограниченных последовательностей
коразмерностей тождеств алгебр с единицей. Построена серия алгебр, у которых основание
экспоненты увеличивается ровно на $1$ при присоединении к исходной алгебре внешней
единицы. Показано, что PI-экспоненты унитарных алгебр могут принимать любое значение
больше двух, а экспоненты конечномерных унитарных алгебр образуют всюду плотное
подмножество в области $[2,\infty)$.
%Библиография: 33 названия.
\end{abstract}
  
% \begin{keywords} 
% тождества, коразмерности, экспоненциальный рост. 
% \end{keywords}

\markright{Рост коразмерностей алгебр с единицей}

% \footnotetext[0]{Работа выполнена при поддержке РФФИ (грант \No~13-01-00234a) и АРРС  (гранты P1-0292, J1-9453 и J1-6721).}

\section{Введение}\label{s1}

\subsection{}\label{s1.1}
В статье изучаются функции, характеризующие количество тождественных соотношений,
выполняющихся в той или иной алгебре. Каждой алгебре $A$ над полем $F$ нулевой
характеристики можно сопоставить целочисленную последовательность $\{c_n(A)\}$,
$n=1,2,\ldots$, построенную по ее полилинейным тождествам. В асимптотическом поведении
этой последовательноти заложена определенная информация о строении самой алгебры
$A$. Например, если $A$ --- ассоциативная алгебра, то $c_n(A)=1$ для всех $n$
тогда и только тогда, когда $A$ --- коммутативная ненильпотентная алгебра. Если же
$c_n(A)=0$ для некоторого $n>1$, то $A$ нильпотентна, $A^n0$ (и наоборот). Недавно 
было показано, что $\{c_n(A)\}$ асимптотически возрастает, т.е. существует такое
натуральное $t$, что $c_{t+j}\le c_{t+j+1}$ для всех $j=0,1,\ldots$. Если при
этом $c_{t-1}> c_t$, то это значение $t$ тесно связано со ступенью нильпотентности 
радикала Джекобсона алгебры $A$ (результат анонсирован в \cite{GZ1}, полное
доказательство опубликовано в \cite{GZ2}). Если поле $F$ алгебраически замкнуто,
а $A$ проста, то $c_n(A) \sim d^n$, где $d=\dim A$ (\cite{R}. Здесь соотношение
$c_n(A) \sim d^n$ означает, что
$$
\lim_{n\to\infty} \sqrt[n]{c_n(A)}=d.
$$

Такой же эффект наблюдается и в случае алгебр Ли \cite{Z}, йордановых алгебр,
альтернативных алгебр и ряда других классов \cite{GSZ}. Для алгебр Ли хорошо известна 
открытая проблема классификации бесконечномерных простых алгебр Ли. В настоящее
время эта проблема, видимо, далека от своего решения, однако определенную
информацию о строении такой алгебры $L$ можно получить, если $\{c_n(L)\}$ имеет
экспоненциальный рост \cite{Rzm}.

\subsection{}\label{s1.2}
Наличие или отсутствие единицы в алгебре существенно скажывается на структуре ее
тождеств. Например, если $A$ --- ассоциативная алгебра с единицей, то совокупность
всех ее тождеств полностью определяется системой так называемых собственных
тождеств \cite{Sp}. Если, кроме того, $A$ удовлетворяет всем тождествам матричной 
алгебры $2\time 2$, то асимптотически для ее Т-идеала существует лишь счетное число
явно ипсываемых вариантов \cite{K}. Если $\{c_n(A)\}$ растет полиномиально, то
$c_n(A)= qn^k = O(n^{k-1}$ Для некоторого целого $k$ и положительного рационального
$q$ \cite{D}. Позднее было показано, что при фиксированном $k$ для любого
$q\in\mathbb{Q}, q>0$, можно подобрать подходящую алгебру \cite{DR}. И в той же работе
было доказано, что если $A$ --- унитарная алгебра, то
$$
\frac{1}{k!}\le q \le \sum_{i=2}^k \frac{(-1)^k}{i!} \simeq \frac{1}{e}.
$$

Еще один положительный эффект наличия единицы проявился в доказательстве следующей
гипотезы.  А.~Регев в качестве уточнения гипотезы Амицура предположил, что
$$
c_n(A)\simeq C n^\frac{t}{2}d^n
$$
для любой ассоциативной PI-алгебры, где $t$ и $n$ --- целые, $C=const$. После серии
частных результатов в 2008 г. гипотеза Регева была подтверждена для алгебр с 1
\cite{BR}, \cite{Be}. И только недавно был анонсирован результат о справедливости
этой гипотезы в общем случае \cite{GZ1}.

В работе \cite{GMZ} для всех вещественных $\gamma >1$ были построены примеры конечномерных
алгебр с экспоненциальным ростом коразмерностей $c_n\sim\gamma'<\gamma$. Как показано
в \cite{Z1},  для конечномерных алгебр с 1 экспонециальный рост не может быть медленнее
чем $2^n$.

В работе \cite{GZ3} было отмечено, что если $A$ --- ассоциативная PI-алгебра, а
$A^\#$ --- алгебра, полученная из $A$ путем присоединения внешней единицы, то
$exp(A^\#)=exp(A)$ или $exp(A)+1$. Это несложное утверждение вытекает из
результатов \cite{GZ4}, \cite{GZ5}, где не только было доказано существование 
предела
$$
exp(A)=\lim_{n\to \infty} \sqrt[n]{c_n(A)}
$$
для любой ассоциативной PI-алгебры $A$, но и предложена процедура вычисления этой
величины. Тем не менее, это наблюдение позволило выдвинуть гипотезу, что
$exp(A^\#$) всегда равняется $exp(A)$ или $exp(A)+1$. Первый нетривиальный пример,
подтверждающий эту гипотезу, был построен в \cite{Z1}, еще один пример предложен
в \cite{BBZ}, а в \cite{RZ} приведена уже серия примеров, в которых для любой алгебры
$A$ из работы \cite{GMZ} с $exp(A)=\gamma\in \mathbb R, 1\le\gamma\le 2$, ее
расширение $A^\#$ имеет экспоненту $exp(A^\#)=\gamma +1\in [2,3]$. Заметим также,
что в работе \cite{Ra1} автором была предложена конструкция построения по алгебре Ли $L$
над полем $F$ алгебры Пуассона, равной $A\oplus F$ как вектороне пространство и
содержит $L$ в качестве подалгебры Ли коразмерности 1. Алгебру $L\oplus F$
 можно считать естественной модификацией алгебры $L^\#$. Несколько позже тот же автор
 доказал, что $exp(L\oplus F)=exp(L)+1$ \cite{Ra2}.

\subsection{}\label{s1.3}
Основной целью данной работы является построение семейства алгебр $A_\gamma$, 
$\gamma\in\mathbb R$, $\gamma >1$, для которых $exp(A_\gamma)=\gamma$ (теорема \ref{t1}),
$exp(A_\gamma)^\#)=\gamma +1$ (теорема \ref{t2}). Отметим, что при построении 
этих примеров использовались бесконечные периодические слова и слова Штурма, 
комбинаторные свойства ктоторых использовались при получении асимптотических
оценок.

Кроме еще одного подтверждения уполянутой гипотезы, эти результаты показывают, что
любое вещественное число $\gamma \ge 2$ может быть реализовано как PI-экспонента
унитарной алгебры (следствие  \ref{c1}). Кроме того, из теоремы \ref{t2} и ряда 
комбинаторных свойств бесконечных слов следует, что PI-экспоненты конечномерных
унитарных алгебр образуют всюду плотное подмножество в области $[2,\infty)$.

С основами теории тождественных соотношений и количественной PI-теории можно
познакомиться в монографиях \cite{B}, \cite{Dren}, \cite{GZbook}.

\section{Основные понятия и конструкции}\label{s2}

\subsection{}\label{s2.1}
Пусть $A$ --- алгебра над полем $F$, а $F\{X\}$ --- абсолютно свободная $F$-агебра
с бесконечным множесвом порождающих $X$. Полином $$f=f(x_1,\ldots x_n,)\in F\{X\}, \ \ 
x_1,\ldots, x_n \in X,$$  называется тождеством $A$, если $f(a_1,\ldots, a_n)=0$
для любых $a_1,\ldots,a_n \in A$. Множество всех тождеств $Id(A)$ алгебры $A$ образует 
идеал в $F\{X\}$. Обозначим через $P_n$ подпространство всех полилинейных многочленов от
$x_1,\ldots, x_n$ в $F\{X\}$. Тогда $P_n\cap Id(A)$ --- множество всех полилинейных 
тождеств степени $n$ алгебры $A$. Хорошо известно, что в случае нулевой характеристики 
основного поля идеал $Id(A)$ полностью определяется набором подпространств
$\{P_n\cap Id(A)\}, n=1,2,\ldots $. Обозначим через $P_n(A)$ факторпространство
$$
P_n(A)=\frac{P_n}{P_n\cap Id(A)},
$$
а через $c_n(A)$ --- его размерность:
$$
c_n(A)=\dim P_n(A).
$$
Величина $c_n(A)$ называется $n$-й коразмерностью тождеств алгебры $A$ (или просто
$n$-й коразмерностью $A$) и является одной из количественных характеристик
совокупности тождественных соотношений лгебры $A$. Исследование асимптотического поведения 
последовательности $\{c_n(A)\}$ --- одна из ключевых задач количественной PI-теории.

В общем случае $\{c_n(A)\}$ может иметь сверхэкспоненциальный рость. Например,
если $A=F\{X\}$, то
$$
c_n(A)=\frac{1}{2}C_{2n-2}^{n-1} n!,
$$
если $A$ --- свободная ассоциативная алгебра, то $c_n(A)=n!$, а если $A$ ---
свободная алгебра Ли, то $c_n(A)=(n-1)!$. Однако во многих случаях рост последовательности 
$\{c_n(A)\}$ ограничен экспоненциальной функцией. Класс алгебр с экспоненциально ограниченным
ростом коразмерностей включает себя все ассоциативные PI-алгебры \cite{R1}, все 
конечномерные алгебры \cite{BD} любой сигнатуры, алгебры Каца-Муди \cite{Z2}, 
бесконечномерные простые алгебры Ли Картановского типа \cite{M} и целый ряд других.
В этом случае определены верхний и нижний пределы
$$
\overline{exp}(A) =\overline{\lim_{n\to\infty}}\sqrt[n]{c_n(A)}\, ,\quad
\underline{exp}(A)= \underline{\lim}_{n\to\infty}\sqrt[n]{c_n(A)},
$$
которые назыааются верхней и нижней PI-экспонентами $A$. Если существует обычный
предел, т.е. $\overline{exp}(A)=\underline{exp}(A)$, то его называют (обычной)
PI-экспонентой.

\subsection{}\label{s2.2}
При изучении асимптотики роста $\{c_n(A)\}$ полезным инструментом служит терия
представлений симметрических групп. Группа $S_n$ естественным образом действует на $P_n$:
$$
\sigma f(x_1,\ldots, x_n) = f(x_{\sigma(1)},\ldots, x_{\sigma(n)}).
$$
При этом подпространство $P_n\cap Id(A)$ инвариантно относительно этого действия,
и поэтому $P_n(A)$ также наделяется структурой $F[S_n]$-модуля. Все необходимые сведения
по теории представлений симметрических групп и ее применению при исследовании тождественных
соотношений можно найти в \cite{J}, \cite{B}, \cite{Dren}, \cite{GZbook}. В силу
полной приводимости представлений группы $S_n$  модуль $P_n(A)$ раскладывается в
прямую сумму неприводимых $F[S_n]$-модулей, что удобно записывать на языке теории
характеров. Характер $\chi(P_n(A))$ называется $n$-м кохарактером $A$ и обозначается как
$\chi_n(A)$. Разложение $P_n(A)$ на неприводимые компоненты записывается как
разложение $\chi_(A)$ в сумму неприводимых характеров:
\begin{equation}
\label{e1}
\chi_n(A)=\sum_{\lambda\vdash n}m_\lambda \chi_\lambda,
\end{equation}
где $\chi_\lambda$ --- характер неприводимого представления $S_n$, соответствующего
разбиению $\lambda$ числа $n$, а неотрицательное целое число $m_\lambda$ --- его
кратность. Соотношение (\ref{e1}) в частности означает, что
\begin{equation}
\label{e2}
c_n(A)=\sum_{\lambda\vdash n}m_\lambda d_\lambda,
\end{equation}
где $d_\lambda=\deg \chi_\lambda$ --- размерность неприводимого представления группы
$S_n$, соответствующего разбиению $\lambda$. Для получения оценок роста коразмерностей нам потребуется еще одна величина, называемая $n$-й кодлиной алгебры $A$, определяемая как
$$
l_n(A)=\sum_{\lambda\vdash n} m_\lambda,
$$
где $m_\lambda$ --- коэффициенты из правой части (\ref{e2}). Очевидно, что
\begin{equation}
\label{e2a}
c_n(A) \le l_n(A)\max\{d_\lambda\vert \lambda\vdash n, m_\lambda\ne 0\}.
\end{equation}

Нам потребуется более детальная информация о строении неприводимых $F[S_n]$-модулей.
Напомним, что разбиением $\lambda$ числа $n$ называется упорядоченный набор целых чисел
$\lambda=(\lambda_1,\ldots,\lambda_k)$, такой, что $\lambda_1\ge\ldots\ge\lambda_k>0$,
$\lambda_1+\ldots+\lambda_k=n$. Число $h(\lambda)=k$ называется высотой $\lambda$. По
разбиению $\lambda$ строится таблица из $n$ клеток, называемая диаграммой Юнга $D_\lambda$.
Она состоит из $k$ строк и содержит $\lambda_j$ клеток в $j$-й строке для каждого
$j=1,\ldots, k$. Если в клетки диаграммы $D_\lambda$ записаны числа $1,\ldots,n$, то
полученная конструкция называется таблицей Юнга $T_\lambda$. Известно, что любой 
неприводимый $F[S_n]$-модуль изоморфен минимальному левому идеалу $F[S_n]e_{T_\lambda}$
группового кольца группы $S_n$, где элемент $e_{T_\lambda}$ строится следующим образом.

Обозначим через $R_{T_\lambda}$ подгруппу всех подстановок, переставляющих числа
$1,\ldots, n$ только в пределах строк таблицы $T_\lambda$. Ясно, что
$R_{T_\lambda}\simeq S_{\lambda_1}\times\cdots\times S_{\lambda_k}$. Аналогично
определяется подгруппа $C_{T_\lambda}$, элементы которой не выводят каждое число за 
пределы столбца $T_\lambda$. Положим
$$
R(T_\lambda)=\sum_{\sigma\in R_{T_\lambda}} \sigma\, ,\quad
C(T_\lambda)=\sum_{\tau\in C_{T_\lambda}}({\rm sgn}\,\tau)\tau
$$
и
$$
e_{T_\lambda}= R(T_\lambda) C(T_\lambda).
$$
Характер этого модуля и называется неприводимым характером $\chi_\lambda$. Элемент 
$e_{T_\lambda}$ называется симметризатором Юнга и является квазиидемпотентом кольца
$F[S_n]$, т.е. $e_{T_\lambda}^2=\gamma e_{T_\lambda}$, где $\gamma$ --- ненулевой 
скаляр. Отсюда в частности следует, что элемент $C(T_\lambda)e_{T_\lambda}$ не равен 
нулю и порождает тот же самый минимальный левый идеал $F[S_n]e_{T_\lambda}$. В
контексте действия $S_n$ на пространстве полилинейных многочленов $P_n$ это
позволяет сделать несложный, но важный вывод.
\begin{remark}\label{r1}
Пусть $M$ --- неприводимый $F[S_n]$-подмодуль в $P_n$. Тогда $M$ порождается как
$F[S_n]$-модуль полилинейным ммногочленом со следующими свойствами:
\begin{itemize}
\item
множество переменных, входящих в $f$, распадается в объединение непересекающися
подмножеств
$$
\{x_1,\ldots,x_n\}=X_1\cup\ldots\cup X_t,
$$
где $t=\lambda_1$ --- длина первой строки $D_\lambda$, $|X_j|$ --- высота $j$-го столбца
$D_\lambda, j=1,\ldots,k$;
\item
полином $f$ кососимметричен по каждому из наборов $X_1,\ldots, X_t$.
\end{itemize}
\end{remark}

\subsection{}\label{s2.3}
Для оценок размерностей неприводимых представлений $S_n$ удобно пользоваться фукцией
$\Phi(\lambda)$, задаваемой на разбиениях следующим образом.

Пусть сначала $0\le x_1,\ldots, x_d\le 1$ --- любые вещественные числа, такие, что
$x_1,+\cdots+x_d=1$, а $d\ge 2$
\begin{equation}
\label{e3}
\Phi(x_1,\ldots, x_d)=\frac{1}{x_1^{x_1}\ldots x_d^{x_d}}.
\end{equation}
Мы будем пользоваться непрерывностью функции $\Phi$ и тем свойством, что если
зафиксировать значения всех переменных, кроме $x_i, x_j$, то максимум $\Phi$ достигается
при $x_i=x_j$. Более того, если  $x_i >x_j$, то $\Phi(x_i-\varepsilon, x_j+\varepsilon)$
растет с ростом $\varepsilon$ от $0$ до $\frac{1}{2}(x_i-x_j)$. Если же зафиксировать одну из переменных, например, $x_d=\gamma$, то максимум достигается при $x_1=\ldots=x_{d-1}$, т.е.
$$
\max\,\Phi=\Phi(\theta,\ldots,\theta,\gamma), \quad \hbox{где} 
\quad (d-1)\theta+\gamma=1. 
$$
Мы будем использовать обозначение
\begin{equation}
\label{e4}
\Phi_d(\theta)=\Phi(\underbrace{\theta,\ldots,\theta}_{d-1},\gamma),\quad (d-1)\theta+\gamma=1.
\end{equation}

Пусть теперь $\lambda=(\lambda_1,\ldots,\lambda_t)\vdash n$ и $d\ge t$. Мы будем записывать
$\lambda$ в виде $\lambda=(\lambda_1,\ldots,\lambda_d)$ даже если $t<d$, полагая
$\lambda_{t+1}=\ldots=\lambda_d=0$. Тогда
$$
\Phi(\lambda)=\Phi(\frac{\lambda_1}{n},\ldots,\frac{\lambda_d}{n}).
$$
Очевидно, что значение $\Phi(\lambda)$. Очевидно, что значение $\Phi(\lambda)$ не зависит 
от $d\ge t$, если использовать соглашение $0^0=1$.

Значение $\Phi(\lambda)$ и степень характера $d_\lambda=\deg\chi_\lambda$ связаны следующим
соотношением
\begin{lemma}\label{L1}\cite[лемма 1]{GZ6}
Пусть $\lambda=(\lambda_1,\ldots,\lambda_t)\vdash n$ --- разбиение $n$ на $t\le d$
компонент и $n\ge 100$. Тогда
$$
\frac{\Phi(\lambda)^n}{n^{d^2+d}}\le d_\lambda\le n\Phi(\lambda)^n.
$$
\end{lemma}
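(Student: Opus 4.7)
The plan is to combine the hook length formula $d_\lambda = n!/\prod_{(i,j)\in D_\lambda} h(i,j)$ with Stirling's approximation, using the observation that $\Phi(\lambda)^n = n^n/\prod_i \lambda_i^{\lambda_i}$ (with the convention $0^0 = 1$) is precisely the exponential part of the multinomial coefficient $M_\lambda := n!/(\lambda_1!\cdots\lambda_d!)$. The proof therefore reduces to two comparisons: $d_\lambda$ versus $M_\lambda$ (controlled by hook estimates), and $M_\lambda$ versus $\Phi(\lambda)^n$ (controlled by Stirling).

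For the upper bound, each hook contains its arm, so $h(i,j) \ge \lambda_i - j + 1$; multiplying over $j$ in each row gives $\prod_j h(i,j) \ge \lambda_i!$, hence $d_\lambda \le M_\lambda$. Applying Stirling to $n!$ and to each positive $\lambda_i!$ (the factors $e^{\lambda_i}$ telescope to $e^n$) yields $M_\lambda \le C_d\sqrt{n}\,\Phi(\lambda)^n$ for some constant $C_d$ depending only on $d$. The assumption $n \ge 100$ is ample to absorb $C_d\sqrt{n} \le n$, giving $d_\lambda \le n\Phi(\lambda)^n$.

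For the lower bound, the complementary estimate $h(i,j) \le \lambda_i - j + d$ is valid because $\lambda'_j \le d$ for a partition with at most $d$ parts. In each nonempty row this gives
\[\prod_{j=1}^{\lambda_i} h(i,j) \le \frac{(\lambda_i + d - 1)!}{(d-1)!} \le (n+d)^{d-1}\lambda_i!,\]
and multiplying over the (at most $d$) nonempty rows yields $\prod h(i,j) \le (n+d)^{d(d-1)}\prod \lambda_i!$, so $d_\lambda \ge M_\lambda/(n+d)^{d(d-1)}$. Stirling in the reverse direction gives $M_\lambda \ge c_d\,\Phi(\lambda)^n/n^{(d-1)/2}$, because the denominator $\prod_{\lambda_i>0}\sqrt{\lambda_i}$ coming from Stirling is at most $n^{(d-1)/2}$. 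The total polynomial loss in the lower bound is therefore of order $n^{d(d-1) + (d-1)/2}$, which is comfortably smaller than $n^{d^2+d}$, leaving enough slack to absorb the $c_d$ for $n \ge 100$.

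The main obstacle is pedestrian but genuinely delicate: one must track every polynomial-in-$n$ correction uniformly in $\lambda$ and confirm that together they fit inside the exponent $d^2 + d$. Particular care is needed for partitions with vanishing parts (where Stirling must not be applied directly, and the conventions $0! = 1$, $0^0 = 1$ are essential) and for partitions with many equal parts (where the hook upper bound ceases to be tight). The generous exponent $d^2 + d$ is chosen precisely so that all these slacks fit without additional work, which is why the lemma holds uniformly for all partitions with at most $d$ parts as soon as $n \ge 100$.
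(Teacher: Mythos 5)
The paper does not prove this lemma at all: it is imported verbatim from \cite[����� 1]{GZ6}, so there is no in-text argument to compare yours against. That said, your strategy is the standard one (and almost certainly the one behind the cited result): squeeze $d_\lambda$ against the multinomial coefficient $M_\lambda=n!/(\lambda_1!\cdots\lambda_d!)$ by hook-length estimates, then compare $M_\lambda$ with $\Phi(\lambda)^n=n^n/\prod_i\lambda_i^{\lambda_i}$ by Stirling. Every individual step you use is correct: $h(i,j)\ge\lambda_i-j+1$ gives $\prod_j h(i,j)\ge\lambda_i!$ and hence $d_\lambda\le M_\lambda$; $h(i,j)\le\lambda_i-j+d$ is valid because the leg contributes at most $\lambda_j'-i+1\le d$; the exponential factors $e^{\lambda_i}$ from Stirling do telescope to $e^n$, leaving only square-root and $O(1)$ corrections; and the conventions $0!=0^0=1$ handle vanishing parts. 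The upper bound $d_\lambda\le n\Phi(\lambda)^n$ follows cleanly, since the Stirling correction there is at most an absolute constant times $\sqrt{n}$.

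The one place that needs repair is the constant-tracking in the lower bound. You discard the $(d-1)!$ when passing from $(\lambda_i+d-1)!/(d-1)!$ to $(n+d)^{d-1}\lambda_i!$, and afterwards you monitor only the power of $n$, declaring $n^{d(d-1)+(d-1)/2}$ comfortably below $n^{d^2+d}$. The exponents are indeed fine (the slack is $n^{(3d+1)/2}$), but the discarded $d$-dependent constant is of order $2^{d(d-1)}$, which for the fixed threshold $n=100$ overwhelms $100^{(3d+1)/2}$ once $d$ is moderately large; so as written the estimate is not uniform in $d$ at $n\ge 100$, which is what the lemma asserts. The fix is cheap: either keep the factorial, using $\binom{\lambda_i+d-1}{d-1}\le(\lambda_i+d-1)^{d-1}/(d-1)!$ so that the product over the at most $d$ rows is bounded by $\bigl(e(n+d)/(d-1)\bigr)^{d(d-1)}$, whose $d$-dependent constant is then harmless against the slack; or simply note that the lower bound is vacuous whenever $\Phi(\lambda)^n\le n^{d^2+d}$, which, since $\Phi(\lambda)\le d$, disposes of all large $d$ outright. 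With either patch your argument is complete; for the way the lemma is actually used in this paper ($d$ fixed, $n\to\infty$) the issue is immaterial.
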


Нам потребуется следующее свойство функции $\Phi$. Пусть $\lambda=(\lambda_1,\ldots,\lambda_q)$, $\mu=(\mu_1,\ldots,\mu_q)$ --- два разбиения числа $n$, 
$\lambda_q, \mu+q >0$. Мы будем говорить, что диаграмма Юнга $D_\mu$ получена из диаграммы
 $D_\lambda$ выталкиванием вниз одной клетки, если существуют такие $1\le i<j \le q$,
 что $\mu_i= \lambda_i-1, \mu_j=\lambda_j+1$ и $\mu_k=\lambda_k$ для всех остальных
 $1\le k \le q$. Если же $\lambda=(\lambda_1,\ldots,\lambda_q)$, $\lambda_q>0$,
$\mu=(\mu_1,\ldots,\mu_q,1)\vdash n$, то $D_\mu$ получена выталкиванием вниз одной
клетки из $D_\lambda$, если одна из строк $D_\mu$ на одну клетку короче, чем у
$D_\lambda$,  а все остальные, кроме последней, имеют ту же длину.

\begin{lemma}\label{L2}\cite[лемма 3]{GZ6}, \cite[лемма 2]{ZR}
Пусть $D_\mu$ получена из $D_\lambda$ выталкиванием вниз одной клетки. Тогда
$\Phi(\mu) \ge \Phi(\lambda)$.
\end{lemma}

Мы также будем использовать и такое свойство функции $\Phi(x_1,\ldots,x_d)$.

\begin{lemma}\label{L3}\cite[лемма 2]{RZ}
Пусть $\Phi(x_1,\ldots,x_d)$ задана формулой (\ref{e3}) и пусть
$\Phi(z_1,\ldots,z_d)=a$ для некоторых фиксированных значений $z_1,\ldots,z_d$. Тогда
$$
\max_{0\le t \le 1}\{\Phi(y_1,\ldots,y_d,1-t)|y_1=tz_1,\ldots,y_d=tz_d \}=a+1,
$$
причем максимум достигается при $t=\frac{a}{a+1}$.
\end{lemma}

Лемма \ref{L3} фактически означает, что при добавлении к диаграмме $D_\lambda$ одной
дополнительной строки значение функции $\Phi(\lambda)$ увеличивается не более
чем на единицу.

 \subsection{}\label{s2.4}
 Для построения примеров алгебр с заданным характером поведения $\{c_n(A)\}$ мы
 воспользуемся подходом, впервые предложенном в работе \cite{GMZ} и базирующемся на комбинаторных свойствах бесконечных двоичных слов. Для этого напомним неекоторые
 понятия.
 
 Пусть $w=w_1w_2\ldots$ --- бесконечное слово в двоичном алфавите, т.е. все $w_i$ равны
 $0$ или $1$. Сложностью слова $w$ называется функция натурального аргумента $Comp_w(n)$,
 равная количеству различных подслов в $w$ длины $n$. Если слово $w$ периодическое, то
 $Comp_w(n)=const=T$ для всех $n\ge T$, где $T$ --- период $w$. Известно также, что если $w$
 не является периодическим, то $Comp_w(n) \ge n+1$ для всех $n\ge 1$ \cite{L}. Сумму
 $w_{k+1}+\ldots+w_{k+m}$ конечного подслова $u=w_{k+1}\ldots w_{k+m}$ принято обозначать как
 $h(u)$, а длину как $|u|$.
 
Для заданного слова $w$
\begin{equation} \label{e5}
\pi(w)=\lim_{n\to\infty} \frac{h(w_1,\ldots, w_n)}{n}
\end{equation}
называется наклоном слова $w$, если предел в правой части (\ref{e5}) существует.

Если $comp_w(n)=n+1$ для всех $n\ge 1$, т слово $w$ называется словом Штурма. Слова Штурма обладают следующими свойствами.

\begin{propos}\label{p1}
Пусть $w$ --- периодическое слово или слово Штурма. Тогда существует такая константа $C$, что
\begin{itemize}
\item[(1)]
$|h(x)-h(y)| \le C$ для любых конечных подслов $x$ и $y$ одинаковой длины;
\item[(2)]
наклон $\pi(w)$ всегда существует4
\item[(3)]
для любого конечного подслова $u$ в $w$
$$
\vert\frac{h(u)}{|u|} - \pi(w) \vert \le \frac{C}{|u|};
$$
\item[(4)]
для любого вещественного $\alpha\in (0;1)$ существует $w$ с $\pi(w)=\alpha$ и $w$ является
периодическим, если $\alpha$ --- рациональное число, либо словом Штурма, если $\alpha$ ---
иррациональное. Более того, можно взять $C=1$, если $w$ --- слово Штурма, либо $C=T$, если
$w$ --- периодическое слово с периодом $T$, и тогда
$$
\pi(w)=\frac{h(w_1\ldots w_T)}{T}.
$$
\end{itemize}
\end{propos}

В дальнейшем мы будем также считать слова из одних нулей или из дних единиц периодическими,
и тогда предложение \ref{p1} распространяется и на случаи $\alpha=0,\alpha=1$.

\section{Слова Штурма и неассоциативные алгебры}\label{s3}

В данном параграфе мы построим селейство неассоциативных алгебр,  PI-экс\-поненты которых 
принимают любые вещественные значения из области $[2;\infty)$. Идея потроения алгебр с 
заданным ростом коразмерностей на базе слов Штурма впервые была предложена и реализована 
в
% работе
 \cite{gmz2, GMZ}, где для любого вещественного $1\le \alpha \le 2$ юыла построена
алгебра $A_\alpha$ с $exp(A_\alpha)=\alpha$. В недавней работе \cite{RZ} было доказано,
что если к $A_\alpha$ присоединить внешнюю единицу, по у полученной алгебры 
$a_\alpha^\#$ экспонента существует и равна $\alpha+1$. Построенная ниже серия
алгебр обобщает конструкцию, предложенную в \cite{GMZ}. Следует отметить, что примеры алгебр
с произвольной PI-экспонентой $\alpha\ge 2$ также были приведены в \cite{GMZ}, однако
попытки их использования для построения унитарных алгебр с экспонентами болше трех не
привели к успеху. Это и вызвало необходимость построения новых примеров.

\subsection{}\label{s3.1}
Пусть $m$ и $d$ --- натуральные числа, $m\ge 2, d\le m-1$, и $w=w_1,w_2\ldots$ --- 
бесконечное слово в двоичном алфавите $\{0;1\}$. Рассмотрим бесконечную
последовательность $(m_1,m_2,\ldots)$, в которой $m_j=m+w_j$ для всех $j\ge 1$. 
 $A(m,d,w)$ задается своим базисом
$$
\{ a_i, b, z_{jk}^i|1\le i \le d, 1\le j \le m_k, k=1,2,\ldots\}
$$
и таблицей умножения
$$
z^i_{jk} a_i=
\left\{
  \begin{array}{rcl}
     z^i_{j+1,k}, &\quad \hbox{если} \quad & j<m_k  \\
    0, &\quad \hbox{если} \quad & j=m_k\, ,
           \end{array}
\right.
$$ 
$$
z^i_{m_k,k} b=
\left\{
  \begin{array}{rcl}
     z^{i+1}_{1k}, &\quad \hbox{если} \quad & i<d  \\
    z_{1,k+1}^1, &\quad \hbox{если} \quad & i=d\, .
           \end{array}
\right.
$$ 
Все остальные произведения базисных элементов равны нулю. Отметим некоторые свойства алгебры
$A(m,d,w)$;
\begin{itemize}
\item
алгебра $A(m,d,w)$ удовлетворяет тождеству $x_1(x_2x_3)\equiv 0$,
\item
линейная оболочка $<z^i_{jk}|1\le i\le d, 1\le j \le m_k, k \ge 1 >$ является идеалом в
$A(m,d,w)$ с нулевым умножением коразмерности $d+1$,
\item
если $f=f(x_1,\ldots, x_n)$ --- полилинейный многочлен степени $n\ge d+3$, кососимметричный 
по $x_1,\ldots, x_{d+3}$, то $f\equiv 0$ --- тождество в $A(m,d,w)$,
\item
если $f=f(x_1,\ldots, x_n)$ --- полилинейный многочлен степени $n\ge 2d+4$, кососимметричный 
по $x_1,\ldots, x_{d+2}$ и по $x_{d+3},\ldots, x_{2d+4}$, то $f\equiv 0$ --- тождество 
в $A(m,d,w)$.
\end{itemize}

Замечание \ref{r1} из предыдущего параграфа сразу же приводит к такому результату.

\begin{lemma}\label{L4}
Пусть $A(m,d,w)$ --- алгебра, заданная бесконечным словом $w$ и целочисленными параметрами
$m\ge 2$ и $1\le d \le m-1$. Если

\begin{equation}\label{e6}
\chi_n(A)=\sum_{\lambda\vdash n} m_\lambda \chi_\lambda
\end{equation}
--- $n$-й кохарактер алгебры $A$, то $m_\lambda\ne 0$ в (\ref{e6}) только при 
$h(\lambda)\le d+2$, где $h(\lambda)$ --- высота $\lambda$, т.е. число строк в диаграмме
$D_\lambda$. Кроме того, если $\lambda=(\lambda_1,\ldots,\lambda_{d+2})$ и $m_\lambda\ne 0$,
то $\lambda_{d+2}\le 1$.
\end{lemma}

\subsection{}\label{s3.2}
Для получения верхней оценки на рост $\{c_n(A(m,d,w)) \}$ нам необходимо сначала ограничить
рост кодлины $\{l_n(A(m,d,w)) \}$.

Пусть сначала $A$ --- произвольная алгебра. Обозначим через $R=R(y_1,y_2,\ldots)$
относительно свободную алгебру многообразия $var(A)$, порожденного алгеброй $A$, а через
$$
W_n^{(p)}(A)=Span\{y_{i_1}\ldots y_{i_n}|1\le i_1,\ldots,i_n\le p\}
$$
линейную оболочку всех одночленой степени $n$ от $y_{1},\ldots, y_{p}$ со всевозможными
расстановками скобок, т.е. всех однородных степени $n$ полиномов от $y_{1},\ldots, y_{p}$
в $R$.

\begin{lemma}\label{L5} \cite[лемма 4.1]{GMZ}
Пусть $A$ --- алгебра с $n$-м кохарактером 
$\chi_n(A)=\sum_{\lambda\vdash n} m_\lambda\chi_\lambda$. Тогда для любого $\lambda\vdash n$
с $h(\lambda)\le p$ выполняется неравенство
\begin{equation}\label{e7}
m_\lambda\le \dim W_n^{(p)}(A).
\end{equation}
\end{lemma}

Всюду  в дальнейшем мы будем опускать скобки в левонормированном произведении, т.е.
записывать $(zt)v$ как $ztv$. Это соглашение особенно удобно при работе с алгебрами
$A(m,d,w)$, поскольку все ненулевые произведения в них левонормированы в симу тождества 
$x_1(x_2x_3)\equiv 0$.

\begin{lemma}\label{L6} 
Пусть $A=A(m,d,w)$ задана $m,d$ и бесконечным словом $w$. Тогда
$$
\dim W_n^{d+2}(A) \le d(d+2)(m+1) Comp_w(n).
$$ 
\end{lemma}

\begin{proof}
Обозначим через $W$ линейную оболочку одночленов вида $ty_{i_1}\ldots y_{n-1}$, где
$t=y_{d+3}$, $1\le i_1,\ldots, y_{n-1}\le d+2$. Тогда
$$
\dim W_n^{(d+2)} \le (d+2)\dim W.
$$

Пусть $y$ --- некоторый элемент из $W$. Ясно, что $y$ --- ненулевой тогда и только тогда,
когда существует гомоморфизм $\sigma: R\to A$, при котором $\sigma(y)\ne 0$.

Чтобы получить оценку на размерность $W$ рассмотрим следующую конструкцию. Пусть
$F<a_1,\ldots, a_d,b>$ --- cсвободная ассоциативная алгебра с порождающими
$a_1,\ldots, a_d,b$ и $M$ --- свободный правый $F<a_1,\ldots, a_d,b>$-модуль с
одним порождающим $x$. Тогда любой элемент из $M$ можно записать в виде линейной комбинации элементов вида $xf(a_1,\ldots, a_d,b)$, где $f(a_1,\ldots, a_d,b)$ --- одночлен от
$a_1,\ldots, a_d,b$. 

Пусть теперь $\sigma$ --- гомоморфизм из $R$ в $A$. Ясно, что условие $\sigma(y)=0$, $y\in W$,
достаточно проверить только для всех гомоморфизмов вида
$$
\sigma(t)= z^i_{jk}, \sigma(y_s)=\alpha_1^sa_1+\cdots+\alpha_d^s a_d+\beta^s b,
1\le s \le d+2.
$$
В этом случае $\sigma$ можно преставить в виде композиции двух линейных отображений
$$
\psi: W \rightarrow M\quad {\rm and}\quad \varphi^i_{j,k}: M\rightarrow A,
$$
где
$$
\psi(ty_{i_1}\ldots y_{i_n-1})=
x(\alpha_1^{i_1}a_1+\ldots+\alpha_d^{i_1}a_d+\beta^{i_1} b)\ldots
(\alpha_1^{i_{n-1}}a_1+\ldots+\alpha_d^{i_{n-1}}a_d+\beta^{i_{n-1}} b),
$$
а
\begin{equation}\label{*e}
\varphi^i_{j,k}(xf(a_1,\ldots,a_d,b)=z^i_{jk}f(a_1,\ldots,a_d,b,
\end{equation}
где многочлен $f(a_1,\ldots,a_d,b$ в правой части (\ref{*e}) интерпретируется как многочлен 
от правых умножений на $a_1,\ldots,a_d,b$ в алгебре $A$.

Обозначим 
$$
I=\cap_{i,j,k} \ker\varphi^i_{j,k}.
$$
Тогда
$$
\dim W \le \dim\frac{M}{I}.
$$

Зафиксируем индексы $i,j,k$. Заметим сначала, что из правил умножения базисных элементов в 
$A$ следует, что существует ровно один одночлен $f^i_{j,k}$, не лежащий в ядре
 $\varphi^i_{j,k}$
$$
f^i_{j,k}=x\underbrace{a_i\ldots a_i}_{m_k-j}b 
\underbrace{a_{i+1}\ldots a_{i+1}}_{p_1}b\ldots b \underbrace{a_{i+r}\ldots a_{i+r}}_{p_r}b
\underbrace{a_{i+r+1}\ldots a_{i+r+1}}_{s},
$$
где индексы у $a_{i+r+1}\ldots a_{i+r+1}$ вычисляются по модулю $d$,
$m_k-j+p_1+\ldots+p_r+s+r+1=n-1$, $s\le d$, а все $p_1,\ldots, p_r$ равны одному из
$m_k,m_{k+1},\ldots$ и определяются однозначно подсловом $w(k, k+n-1) =
(w_k,w_{k+1},\ldots_{k+n-1})$ длины $n$ слова $w$. В частности, $f^i_{j,k}=f^i_{j,l}$ и
$\ker\varphi^i_{j,k}= \ker\varphi^i_{j,l}$, если $w(k, k+n-1)=w(l, l+n-1)$ в слове $w$.
Так как $1\le i \le d, 1\le j \le m+1$, то число различных ядер $\ker\varphi^i_{j,k}$ не
превосходит $d(m+1)Comp_w(n)$. Следовательно,
$$
\dim\frac{M}{I} \le d(m+1)Comp_w(n),\quad \dim W_n^{d+2}(A) \le d(d+2)(m+1) Comp_w(n),
$$
и лемма доказана.
\end{proof}

Практически точно так же доказывается и следующее утверждение, которое потребуется нам при присоединении единицы к исходной алгебре.

\begin{lemma}\label{L7}
Пусть $A=A(m,d,w)$ задана $m,d$ и бесконечным словом $w$. Тогда
$$
\dim W_n^{d+3}(A) \le d(d+3)(m+1) Comp_w(n).
$$ 
\end{lemma}

В качестве следствия мы получаем оценку роста кодлины для алгеьры, заданной словом Штурма или 
бесконечным периодическим словом.

\begin{propos}\label{p2}
Пусть $A=A(m,d,w)$, где $w$ ---  слово Штурма или бесконечное периодическое слово. 
Тогда
$$
l_n(A) \le 2d(d+2)(m+1)n^{d+1}(n+1).
$$
%при всех $n>T$
\end{propos}
\begin{proof}
Согласно лемме \ref{L4} мы имеем: $h(\lambda)\le d+2, \lambda_{d+2}\le 1$ для любого
разбиения $\lambda\vdash n$ с ненулевой кратностью $m_\lambda$. Количество таких
разбиений на превосходит $2d n^{d+1}$. Поэтому леммы \ref{L5} и \ref{L6}дают
требуемую оценку.
\end{proof}

\subsection{}\label{s3.3}
Теперь мы можем приступить к получению верхних оценок PI-экспонент.

Пусть $A=A(m,d,w)$ --- алгебра, построенная по бесконечному слову $w$ и пусть теперь
$w$ --- периодическое слово или слово Штурма. Если $$f=f(z^i_{jk},a_1,\ldots,a_d,b)$$
ассоциативное слово в алфавите $\{z^i_{jk},a_1,\ldots,a_d,b\}$, то можно говорить о его
степенях $\deg_b f,\deg_{a_i}f,\deg_{z^i_{jk}}$ по переменным, об общей степени $\deg f$,
а также о значении $f$ в $A$, если рассматривать его как левонормированное произведение
базисных элементов.

Нам понадобится одно достаточнон условие того, что $f\ne 0$.

\begin{lemma}\label{A0}
Для заданных $m,d,w$ найдется такая последовательность $\{\varepsilon_n>0\}, n=1,2,\ldots$, 
что если $f=f(z^i_{jk},a_1,\ldots,a_d,b)$ --- одночлен степени $n$, не равный нулю в
$A(m,d,w)$, то
$$
\frac{\deg_b F}{n} \le\frac{1}{m+\alpha}+\varepsilon_n,
$$
где $\alpha=\pi(w)$ --- наклон слова $w$. При этом $\varepsilon_n\to 0$, если $n\to\infty$.
\end{lemma}
\begin{proof}
Слово $f$ можно записать в виде $f=ZPQ$, где $Z$ --- произведение базисных элементов 
$\{z^i_{jk},a_\alpha, b\}$ степени $\deg Z\le (m+1)d, Q=Q(a_1,\ldots,a_d,b), \deg Q \le 
(m+1)d$, а 
$$
P=a_1^{m_k-1}b\ldots a_d^{m_k-1}b\ldots a_1^{m_{k+t-1}-1}b\ldots a_d^{m_{k+t-1}-1}b.
$$
Тогда $\deg_b P=td$ и
$$
\deg_{a_i}P=(m_k-1)+\ldots+(m_{k+t-1}-1)=m_1+\ldots+m_{k+t-1}-t=
(m-1)t+w_k+\ldots+w_{k+t-1}
$$
для любого $i=1,\ldots,d$. Как отмечено в предложении \ref{p1} для слова $w$ существует такая
 константа $C$, что $|w_k+\ldots+w_{k+t-1}-\alpha t| \le C$. Поэтому
$$
\deg P=dmt+d(w_k+\ldots+w_{k+t-1})\ge dt(m+\alpha-\frac{C}{t})
$$
и $n=\deg f\ge \deg P$, а $\deg_b f \le td+2d=(t+2)d$. Следовательно,
$$
\frac{\deg_b f}{n} \le \frac{1+\frac{2d}{t}}{m+\alpha-\frac{C}{t}}.
$$
Поскольку $n\le d(m_k+\ldots+m_{k+t-1})+2(m+1)d \le d(m+1)t+2(m+1)d$, то
$$
t\ge \frac{n}{d(m+1)} - \frac{2}{d}
$$
и $t$ растет линейно с ростом $n$. Следовательно,
$$
\lim_{n\to\infty} \frac{\deg_b f}{n} = \frac{1}{m+\alpha},
$$
откуда следует утверждение леммы.

\end{proof}

Теперь мы получим оцеку сверху на рост коразмерностей алгебры $A(m,d,w)$.

\begin{lemma}\label{A1}
Пусть $A=A(m,d,w)$, где $w$ --- бесконечное периодическое слово или  слово Штурма с
наклоном $\alpha=\pi(w)$. Тогда
$$
\overline{exp}(A)=\Phi_d(\frac{1}{m+\alpha}),
$$
где функция $\Phi_d$ задана формулой (\ref{e4}).
\end{lemma}
\begin{proof}
Зафиксируем произвольное малое $\varepsilon>0$ и покажем, что для него существует такое
$N$, что если $n\ge N, \lambda\vdash n$ и $m_\lambda\ne 0$ в (\ref{e6}), то
$$
\Phi(\lambda) \le \Phi_d\left(\frac{1}{m+\alpha}+\varepsilon\right).
$$

Пусть сначала $\lambda_{d+1}=0$, т.е. $\lambda=(\lambda_1,\ldots,\lambda_d,0,0)$. Тогда
$$
\Phi(\lambda) \le\Phi\left(\frac{1}{d},\ldots,\frac{1}{d},0,0\right) \le 
\Phi\left(\underbrace{\theta,\ldots,\theta}_d,\frac{1}{m+\alpha}\right) =
\Phi_d\left(\frac{1}{m+\alpha} \right).
$$

Пусть теперь $\lambda_{d+1}\ne 0$. Тогда в силу замечания \ref{r1} существует полилинейный
многочлен $h=h(x_1,\ldots,x_n)$ кососимметричный по $\lambda_1$ наборам переменных
$X_{1},\ldots, X_{\lambda_1}$, причем $|X_1|=d+1$ или $d+2$ в зависимости от значения
$\lambda_{d+2}$ (0 или 1), а $|X_2|=\ldots= |X_{\lambda_1}|=d+1$, не являющийся
тождеством $A$. Следовательно, существует такая подстановка $\varphi: X\to\{a_r,d,z^i_{jk}\}$,
что $f=\varphi(h)=f(z^i_{jk},a_1,\ldots, a_d,b)$ --- ненулевой одночлен в $A$. Тогда
$\deg_b f\ge \lambda_{d+1}$, и по лемме \ref{A0}
$$
\frac{\lambda_{d+1}}{n} \le \frac{\deg_b f}{n} \le \frac{1}{m+\alpha} + \varepsilon_n.
$$
Если $\lambda_{d+2}=0$, то
$$
\Phi(\lambda) \le \Phi(\underbrace{\theta,\ldots,\theta}_d,\frac{1}{m+\alpha}+\varepsilon_n,0)
= \Phi_d(\frac{1}{m+\alpha}+\varepsilon_n) \le \Phi_d(\frac{1}{m+\alpha}+\varepsilon)
$$
при всех больших $n$, поскольку $\varepsilon_n\to 0$  с ростом $n$, а функция
$\Phi_d(\frac{1}{m+\alpha}+x)$ возрастает  при увеличении $x$. Если же $\lambda_{d+2}=1$,
$$
\Phi(\lambda) \le 
\Phi(\underbrace{\theta,\ldots,\theta}_d,\frac{1}{m+\alpha}+\varepsilon_n,\frac{1}{n}).
$$
Поскольку $\varepsilon_n,\frac{1}{n}\to 0$ при $n\to\infty$, то найдется такое $n$, что
$$
\frac{1}{m+\alpha}+\varepsilon_n+\frac{1}{n} < \frac{1}{m+\alpha}+\varepsilon,
$$
$$
\max \left(\frac{1}{m+\alpha}+\varepsilon_n\right)^{(\frac{1}{m+\alpha}+\varepsilon_n)}
n^{-n} \le 
(\frac{1}{m+\alpha}+\varepsilon)^{(\frac{1}{m+\alpha}+\varepsilon)}.
$$
Следовательно,
$$
\Phi(\lambda) \le 
\Phi\left(\theta,\ldots,\theta,\frac{1}{m+\alpha}+\varepsilon_n,\frac{1}{n}\right) \le 
\Phi\left(\theta',\ldots,\theta',\frac{1}{m+\alpha}+\varepsilon_n,0\right)
$$
$$ 
= \Phi_d\left(\frac{1}{m+\alpha}+\varepsilon\right),
$$
где $\theta'd+ \frac{1}{m+\alpha}+\varepsilon=1$ и $\theta' \ge \theta$.
Поскольку
$$
c_n(A)=\sum m_\lambda d_\lambda \le l_n(A) \max\{d_\lambda| \lambda\vdash n, m_\lambda\ne 0\},
$$
то из леммы \ref{L1} и предложения \ref{p2} следует, что
$$
\overline{\lim_{n\to\infty}}\sqrt[n]{c_n(A)} \le 
\Phi_d\left(\frac{1}{m+\alpha}+\varepsilon\right)
$$
для любого фиксированного $\varepsilon >0$. Следоательно, 
$$
\overline{exp}(A) \le \Phi_d\left(\frac{1}{m+\alpha}\right),
$$
и лемма доказана.

\end{proof}

Теперь перейдем к нижней оценке роста коразмерностей алгебры $A(m,d,w)$.

\begin{lemma}\label{A2}
Пусть $A(m,d,w)$ --- алгебра из леммы \ref{A1}. Тогда $\underline{exp}(A)\ge
\Phi_d\left(\frac{1}{m+\alpha}\right)$, где $\alpha=\pi(w)$ --- наклон слова $w$.
\end{lemma}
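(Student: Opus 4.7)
The plan is to establish the matching lower bound by constructing, for each large $T$, a partition $\lambda = \lambda(T) \vdash n_T$ of height $d+1$ with positive multiplicity $m_\lambda$ in the cocharacter decomposition~(\ref{e6}), such that $\Phi(\lambda)$ approaches $\Phi_d\bigl(1/(m+\alpha)\bigr)$ as $T \to \infty$. Once this is in place, Lemma~\ref{L1} gives $c_{n_T}(A) \ge m_\lambda d_\lambda \ge \Phi(\lambda)^{n_T}/n_T^{(d+1)^{2}+(d+1)}$, from which taking $n_T$-th roots yields the desired asymptotic along the subsequence $\{n_T\}$. The gap $n_{T+1} - n_T \le d(m+1)$ being bounded, a routine interpolation will upgrade the subsequential bound to the full $\liminf$.

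Fix $T$, set $s_T = w_1 + \cdots + w_T$, and single out the canonical nonzero product
$$
u_T = z^1_{1,1}\cdot a_1^{m_1-1}b\,a_2^{m_1-1}b\cdots a_d^{m_1-1}b\cdot a_1^{m_2-1}b\cdots a_d^{m_T-1}b = z^1_{1,T+1},
$$
of length $n_T = 1 + dT + d\bigl((m-1)T + s_T\bigr)$, which contains each $a_i$ exactly $N_T = (m-1)T + s_T$ times and $b$ exactly $dT$ times. This dictates the target partition $\lambda = (N_T, N_T, \ldots, N_T, dT)$ with $d$ rows of length $N_T$ and a final row of length $dT$. I would then replace the letters of $u_T$ by $n_T - 1$ fresh multilinear variables distributed across a fixed tableau $T_\lambda$ so that row $i$ (for $i \le d$) is populated by variables specializing to $a_i$ and row $d+1$ by variables specializing to $b$. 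Call the resulting monomial $g_T$ and set $f_T = e_{T_\lambda} g_T = R(T_\lambda) C(T_\lambda) g_T$.

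The main technical step, and the principal obstacle, is to verify that $f_T \notin Id(A)$, so that by Remark~\ref{r1} the multiplicity $m_\lambda$ is nonzero. Here the rigid multiplication rules of $A(m,d,w)$ come into play: from any $z^i_{j,k}$ the only nonzero right-multiplication is by $a_i$ (if $j < m_k$) or by $b$ (if $j = m_k$), every other letter annihilating the generator. Under the canonical specialization, each column-antisymmetrizing transposition in $C(T_\lambda)$ exchanges either two variables specializing to different $a_i$'s, or an $a_i$-variable with a $b$-variable; in both cases the swapped product breaks at the swapped position. Consequently only the identity term in $C(T_\lambda)$ survives, while the row symmetrizer $R(T_\lambda)$ merely sums $|R_{T_\lambda}|$ positive copies of $u_T \ne 0$, giving $f_T \ne 0$ in $A$.

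Finally, I combine $m_\lambda \ne 0$ with Proposition~\ref{p1}, which yields $s_T/T \to \alpha$ and hence $\lambda_i/n_T \to (m-1+\alpha)/(d(m+\alpha)) = \theta$ for $i \le d$ and $\lambda_{d+1}/n_T \to 1/(m+\alpha)$, with $d\theta + 1/(m+\alpha) = 1$. Continuity of $\Phi$ on the closed simplex then gives $\Phi(\lambda) \to \Phi_d\bigl(1/(m+\alpha)\bigr)$, and Lemma~\ref{L1} converts this into $\liminf_T \sqrt[n_T]{c_{n_T}(A)} \ge \Phi_d\bigl(1/(m+\alpha)\bigr)$. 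The bounded-gap interpolation extends the estimate from $\{n_T\}$ to all $n$, producing $\underline{exp}(A) \ge \Phi_d\bigl(1/(m+\alpha)\bigr)$, as required.
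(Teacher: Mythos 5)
Your overall strategy is exactly the paper's: build, for a sparse sequence of degrees, a highest-weight vector of the form (row symmetrization)$\circ$(column alternation) applied to a monomial whose canonical evaluation is the nonzero path $z^1_{11}a_1^{m_1-1}b\cdots$, observe that the rigidity of the multiplication table kills every non-identity column permutation so that only the identity term survives, conclude $m_\lambda\ne0$, and then feed $\Phi(\lambda^{(T)})\to\Phi_d(1/(m+\alpha))$ into Lemma~\ref{L1} together with the bounded gaps $n(t+1)-n(t)\le d(m+1)$. The paper implements the same idea with explicit alternations $Alt^j_i$ built up inductively and a final symmetrization $Sym$, but that is only a cosmetic difference from your $e_{T_\lambda}=R(T_\lambda)C(T_\lambda)$.

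There is, however, one concrete defect in your construction: you discard the initial letter $z^1_{11}$ when multilinearizing. Your $g_T$ is a monomial in $n_T-1$ variables all of which specialize to elements of $\{a_1,\dots,a_d,b\}$, and in $A(m,d,w)$ every left-normed product of the $a_i$'s and $b$ alone is zero --- the only nonzero products begin with some $z^i_{jk}$ (the span of the $z$'s is an ideal, and all products among $\{a_i,b\}$ vanish). So as written $\varphi(g_T)=0$, not $u_T$, and the whole nonvanishing argument collapses; this also shows up as the inconsistency that your $\lambda=(N_T,\dots,N_T,dT)$ is a partition of $n_T-1$ while you announced $\lambda\vdash n_T$. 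Since the algebra is not unital you cannot avoid introducing a variable for $z^1_{11}$; once you do, it must occupy an extra box of the diagram, and the natural place is a $(d{+}2)$-nd row of length $1$ --- which is precisely the partition $\lambda^{(n)}=(N_T,\dots,N_T,dT,1)\vdash n_T$ used in the paper (the paper alternates $z$ inside the first column of length $d+2$). This extra box changes nothing asymptotically ($1/n_T\to0$, so $\Phi(\lambda^{(n)})$ still tends to $\Phi_d(1/(m+\alpha))$, with Lemma~\ref{L1} applied for height $d+2$), so the repair is routine; but the lemma you actually need is the height-$(d{+}2)$ version, and the column-breaking argument must be run on the monomial that starts with the $z$-variable. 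With that correction your proof is sound and coincides with the paper's.
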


\begin{proof}
Рассмотрим одночлен
$$
h_1=zx^1_1x^1_2\ldots x^1_dy^1_1\ldots x^d_1x^d_2\ldots x^d_py^1_d
$$
в свободной алгебре $F\{X\}$ степени $(p+1)d+1$, где $p=m_1-1 \ge m-1\ge d$. Пусть 
$Alt^1_1:P_{(p+1)d+1}\to P_{(p+1)d+1}$ --- оператор альтернирования по
$z,x^1_1, x^2_1,\ldots, x^d_1,y^1_1$, а $Alt^1_i$ --- оператор альтернирования по
$x^1_i, x^2_i,\ldots, x^d_i,y^1_i$ для всех $2\le i \le d$. Если $p>d$, то обозначим 
также через $Alt^1_{d+j}$ альтернирование по $x^1_{d+j}, x^2_{d+j},\ldots, x^d_{d+j}$
для всех $1\le j \le p-d$. Положим $f_1= Alt^1_1\ldots Alt^1_p(h_1)$.

Рассмотрим подстановку $\varphi: X\to A$, при которой
$$
\varphi(z)=z^1_{11},\varphi(x^1_1)=\ldots =\varphi(x^1_p)=a_1,\ldots,
\varphi(x^d_1)=\ldots =\varphi(x^d_p)=a_d,
$$
$$
\varphi(y^1_1)=\ldots =\varphi(y^1_d)=b.
$$
Тогда
$$
\varphi(f_1)= z^1_{11}\underbrace{a_1\ldots a_1}_{m_1}b\ldots
\underbrace{a_d\ldots a_d}_{m_1}b= z^1_{12}.
$$
Заметим, что результат подстановки $\varphi$ не изменится, если применить ее не к самому
элементу $f_1$, а к его симметризации $Sym\, f_1$, где $Sym$ означает симметризацию 
по наборам
$\{x^1_1,\ldots, x^1_p\}$, $\ldots$, $\{x^d_1,\ldots, x^d_p\}$, $\{^1_1,\ldots, y^1_d\}$. 
Тогда многочлен $Sym\, f_1$ порождает в $P_{(p+1)d+1}$ неприводимый $F[S_{(p+1)d+1}]$-модуль,
соответствующий разбиению $\lambda =(\lambda_1,\ldots,\lambda_{d+2})$, где
$\lambda_1=\ldots=\lambda_d=p=M_1$, $\lambda_{d+1}=d, \lambda_{d+2}=1$, а условие
$\varphi(Sym\, f_1)\ne 0$ означает, что кратность $m_\lambda$ в разложениии (\ref{e1}) не
равна нулю.

Обозначим $p_1=p$. Далее для $j=2,3,\ldots$ строим многочлены $f_2,f_3,\ldots$ следующим
образом. Если $f_1,\ldots, f_{j-1}$ уже построены, то берем
$$
h_j=f_{j-1}x^1_{q+1}\ldots x^1_{q+p_j}y^1_j\ldots
x^d_{q+1}\ldots x^d_{q+p_j}y^d_j,
$$
где $q=p_1+\ldots +p_{j-1},\, p_j=m_j-1$ и определяем $f_j$ как
$$
f_j=Alt^j_1\ldots Alt^j_{p_j}(h_j),
$$
где $Alt^j_1,\ldots ,Alt^j_d$ ---  альтернирования по наборам 
$\{x^1_{q+1},\ldots,x^d_{q+1}, y^j_1\}$, $\ldots$,\\ $\{x^1_{q+d},\ldots,x^d_{q+d}, y^j_d\}$ 
соответственно. Если же $p_j>d$, то $Alt^j_{d+i}$ --- альтернирование по
$\{x^1_{q+d+i},\ldots,x^d_{q+d+i}\}$, $1\le i \le p_j-d$. Расширим действие подстановки
$\varphi: X\to A$, построенной на $(j-1)$-м шагеб полагая
$$
\varphi(x^1_{q+1})=\ldots= \varphi(x^1_{q+p_j})=a_1,\ldots,
\varphi(x^d_{q+1})=\ldots= \varphi(x^d_{q+p_j})=a_d,
$$
$$
\varphi(y^j_{1})\ldots= \varphi(y^j_d)=b.
$$
Тогда, ка и прежде,
$$
\varphi(Sym\, f_j)=z^1_{1,j+1} \ne 0,
$$
где симетризация $Sym$ проводится по наборам
$$
\{x^1_1,x^1_2,\ldots,x^1_{q+p_j}\},\ldots,\{x^d_1,x^d,\ldots,x^d{q+p_j}\},
\{y^1_1,\ldots,y^1_d,\ldots,y^j_1,\ldots, y^j_d\}.
$$
Тогда, как и при $j=1$, $Sym\, f_j$ порождает неприводимый модуль с характером $\chi_\lambda$,
где $\lambda=(\lambda_1,\ldots,\lambda_{d+2})$, $\lambda_1=\ldots= \lambda_d=m_1+\ldots+m_j-j,
\lambda_{d+1}=jd, \lambda_{d+2}=1$, и $m_\lambda\ne 0$ в (\ref{e1}).

Таким образом, для каждого натурального $t$ мы построили не являющийся тождеством 
многочлен $f_t$ степени
$$
n=n(t)=(m_1+\ldots+m_t)d+1=tmd+d(w_1+\ldots+w_t)+1.
$$
При этом ненулевое значение $f_t$ принимает при подстановке $\varphi: X\to A$, когда
элемент $b$ подставляется $td$ раз. Тогда по лемме \ref{A0}
$$
\frac{td}{n} \le \frac{1}{m+\alpha}+\varepsilon_n,
$$
где $\alpha=\pi(w)$ --- наклон $w$, а $\varepsilon_n\to 0$  при $n\to\infty$. Кроме того,
симметризация $Sym\, f_t$ тоже не является тождеством в $A$, $\varphi(Sym\, f_t)=
K\cdot\varphi(f_t), K\ne 0$, и порождает в $P_n$ неприводимый $F[S_n]$-модуль с характером
$\chi_{\lambda^{(n)}}$, где
$$
\lambda^{(n)}=(\lambda_1,\ldots,\lambda_{d+2}),\,
\lambda_1=\ldots=\lambda_d,\, \lambda_{d+1}=td,\, \lambda_{d+2}=1.
$$
Следовательно,
$$
\frac{\lambda_{d+1}}{n}=\frac{1}{m+\frac{w_1+\ldots+w_t}{t}+\frac{1}{td}}=\beta
$$
и
$$
\Phi(\lambda^{(n)})=
\Phi\left(\frac{\lambda_1}{n},\ldots,\frac{\lambda_1}{n},\beta,\frac{1}{n}\right).
$$

Чтобы получить оценку снизу на $\Phi(\lambda^{(n)})$, воспользуемся свойствами периодических 
слов и слов Штурма. Согласно предложению \ref{p1}
$$
\lim_{n\to\infty} \frac{w_1+\cdots+w_t}{t}=\alpha,
$$
а поскольку $mtd \ le n \le (m+1)td$, то величину $\frac{w_1+\cdots+w_t}{t}$ можно сделать
сколь угодно близкой к $\alpha$ для всех достаточно больших $n$. Следовательно, для
любого $\varepsilon>0$ найдется такое $N$, что
$$
\beta=\frac{1}{m+\frac{w_1+\ldots+w_t}{t}+\frac{1}{td}} \ge\frac{1}{m+\alpha}-\varepsilon
$$
при всех $n\ge N$. Тогда из свойств функции $\Phi$ мы получаем
$$
\Phi(\lambda^{(n)})=
\Phi\left(\underbrace{\theta,\ldots,\theta}_{d?},\frac{1}{m+\alpha}-\varepsilon,0\right)=
\Phi\left(\frac{1}{m+\alpha}-\varepsilon\right),
$$
где $\theta d+ \frac{1}{m+\alpha}-\varepsilon=1$.

Так как 
$$
c_n(A)\ge d_{\lambda^{(n)}} \ge \frac{1}{n^{(d+2)^2+d+2}}\Phi\left(\lambda^{(n)} \right)^n
$$
в силу леммы \ref{L1}, а $\varepsilon >0$ выбрано произвольно, то
$$
\underline{\lim}_{n(t)\to\infty} \sqrt[n(t)]{c_{n(t)}(A)} \ge 
\Phi_d\left(\frac{1}{m+\alpha}\right).
$$
Осталось заметить, сто $c_n(A)$ --- неубывающая последовательность и что 
$n(t+1)-n(t) \le (m+1)d$, откуда следует равенство
$$
\underline{exp}(A)=\underline{\lim}_{n\to\infty} \sqrt[n]{c_{n(t)}(A)} \ge 
\Phi_d\left(\frac{1}{m+\alpha}\right).
$$
Лемма доказана.
\end{proof}

Леммы \ref{A1} и \ref{A2} сразу же дают нам основной результат данного параграфа.

\begin{theorem}\label{t1}
Пусть $m$ и $d$ --- целые числаб $m\ge 2, 1\le d \le m-1$, а $w$ --- бесконечное периодическое 
слово или слово Штурма с  наклоном $\alpha$. Тогда PI-экспонента алгебры $A=(m,d,w)$ существует и равна
$$
exp(A)=\Phi_d\left(\frac{1}{m+\alpha} \right)=
\Phi\left(\underbrace{\frac{m+\alpha+1}{d(m+\alpha)},\ldots,
\frac{m+\alpha+1}{d(m+\alpha)}}_d,\frac{1}{m+\alpha} \right).
$$
\end{theorem}

\section{Экспоненты алгебр с присоединенной единицей}\label{s4}

\subsection{}\label{s4.1}
Напомним, что если к алгебре $A$ присоединяется внешним образом единица, то полученную 
в результате алгебру мы обозначаем как $A^\#$. мы будем присоединять единицы к алгебрам
$A(m,d,w)$, рассмотренным в предыдущем параграфе.

Нам понадобится технический результат зи работы \cite{RZ}.

Напомним, что для заданной алгебры $B$ через $W_n^{(p)}(B)$ обозначается подпространство всех однородных степени $n$ многочленов от $y_1,\ldots, y_p$ в относительно  свободной алгебре
$R(y_1,y_2,\ldots)$ многообразия $var(B)$ со свободными порождающими $y_1,y_2,\ldots$~.

\begin{lemma}\label{LL1}\cite[лемма 6]{RZ}
Пусть $B$ --- произвольная алгебра и пусть $$\dim W_n^{(p)}(B) \le\alpha n^T$$ для некоторых
$\alpha\in\mathbb{R}$ и $T\in \mathbb{N}$. Тогда 
$\dim W_n^{(p)}(B^\#) \le\alpha(n+1)^{T+p+1}$ 
\end{lemma}

Сначала мы оценим сверху рост кодлины.

\begin{lemma}\label{LL2}
Пусть $A=A(m,d,w)$ --- алгебра из предыдущего параграфа, где $m\ge 2, d\le m-1$, $w$ --- 
слово Штурма или бесконечное периодическое слово. Тогда
$$
l_n(A^\#)\le (n+1)^{2d+9}
$$
для всех достаточно больших $n$.
\end{lemma}

\begin{proof}

По лемме \ref{L7}
$$
\dim W_n^{(d+3)}(A) \le d(d+3)(m+1)Comp_w(n).
$$

Так как сложность периодического слова --- константа, а у слова Штурма она равна $n+1$, то
$$
\dim W_n^{(d+3)}(A) \le n^2
$$
для всех достаточно больших $n$. Поэтому
$$
\dim W_n^{(d+3)}(A^\#) \le (n+1)^{d+6}
$$
по лемме  \ref{LL1}. Из замечания \ref{r1} вытекает, что
$$
\chi_n(A^\#)=\sum_{{\lambda\vdash n\atop h(\lambda)\le d+2}} m_\lambda\chi_\lambda,
$$
а $m_\lambda\le \dim W_n^{(d+3)}(A^\#) \le (n+1)^{d+6}$. И поскольку число разбиений
$\lambda\vdash n$ с $h(\lambda) \le d+3$ не превосходит $(n+1)^{d+3}$, то
$$
l_n(A^\#) \le (n+1)^{2d+9}.
$$ 
 \end{proof}

Лемма \ref{LL2} потребуется нам для верхней оценки PI-экспоненты алгебры $A(m,d,w)^\#$.
Но сначала мы оценим рост ее коразмерностей снизу.

\begin{lemma}\label{LL3}
Пусть $A=A(m,d,w)$ задана параметрами $m\ge 2, d\le m-1$ и $w$. Тогда
$$
\underline{exp}(A^\#) \ge exp(A)+1.
$$
\end{lemma}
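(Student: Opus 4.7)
The plan is to exhibit, for a divergent subsequence $N = N(t)$, partitions $\mu^{(N)} \vdash N$ with $m_{\mu^{(N)}} \ne 0$ in the $N$-th cocharacter of $A^\#$ and $\Phi(\mu^{(N)}) \to exp(A) + 1$, and then to convert this into the desired lower bound for $\underline{exp}(A^\#)$ via Lemmas \ref{L1} and \ref{LL2}.

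I would start from the polynomial $f_t$ and the partition $\lambda^{(n_0)} \vdash n_0$, $n_0 = n(t)$, constructed in Lemma \ref{A2}, whose row ratios tend to $(\theta,\ldots,\theta,1/(m+\alpha))$ with $d\theta + 1/(m+\alpha)=1$, so that $\Phi(\lambda^{(n_0)}) \to a := exp(A)$. Setting $\tau = a/(a+1)$ and $s = s(t) = \lfloor n_0/a \rfloor$, $N = N(t) = n_0+s$, Lemma \ref{L3} applied to the $(d+1)$-tuple $(\theta,\ldots,\theta,1/(m+\alpha))$ yields
$$
\Phi(\underbrace{\tau\theta,\ldots,\tau\theta}_{d},\,\tau/(m+\alpha),\,1-\tau) = a+1.
$$
The inequalities $a \le d \le m-1 < m+\alpha$ and $d \le m+\alpha-1$ ensure, for all large $t$, that $\lambda_1 \ge s \ge td$; thus the partition
$$
\mu^{(N)} = (\underbrace{\lambda_1,\ldots,\lambda_1}_{d},\,s,\,td,\,1)\vdash N
$$
is well defined, is obtained from $\lambda^{(n_0)}$ by adding one box to each of its first $s$ columns, and its row ratios tend to $(\tau\theta,\ldots,\tau\theta,1-\tau,\tau/(m+\alpha),0)$, so $\Phi(\mu^{(N)}) \to a+1$.

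The key technical step is producing a multilinear polynomial $h_t \in P_N$ that carries $\chi_{\mu^{(N)}}$ and does not vanish on $A^\#$. Take the same word underlying $f_t$, but replace each of the first $s$ alternating blocks by its enlargement with one new variable $u_j$ inserted at a fixed position, obtaining a multilinear $h_t$ of degree $N$ antisymmetric on $s$ sets of sizes $\mu'_1,\ldots,\mu'_s$ and on $\lambda_1-s$ further sets of size $d$; these are precisely the column heights of $\mu^{(N)}$, so by Remark \ref{r1} the $F[S_N]$-module generated by $h_t$ is isomorphic to $\chi_{\mu^{(N)}}$. Substituting the $x$-variables as in Lemma \ref{A2} and setting each $u_j = 1 \in A^\#$, the centrality of the unit lets each enlarged antisymmetrization over $h+1$ variables reduce to a signed sum of $h+1$ copies of the original antisymmetrization over the $h$ remaining variables, one for each insertion position of $1$; by the computation of Lemma \ref{A2} the total is a nonzero scalar multiple of $z^1_{1,t+1}$, so $h_t \notin Id(A^\#)$ and $m_{\mu^{(N)}} \ne 0$.

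Combining Lemma \ref{L1} with $h(\mu^{(N)}) \le d+3$,
$$
c_N(A^\#) \ge d_{\mu^{(N)}} \ge \frac{\Phi(\mu^{(N)})^N}{N^{(d+3)^2+d+3}},
$$
so $\sqrt[N]{c_N(A^\#)} \to a+1$ along $\{N(t)\}$. Since $A^\#$ is unital, $\{c_n(A^\#)\}$ is non-decreasing, and $N(t+1)-N(t)$ is bounded, so the subsequential bound extends to $\underline{exp}(A^\#) \ge exp(A)+1$. The main obstacle will be the nonvanishing argument in the third paragraph: one must verify that substituting the central element $1$ into each enlarged antisymmetric set does not annihilate the polynomial, which is the essential new feature that distinguishes $A^\#$ from $A$ and is ultimately responsible for the exponent jumping up by exactly $1$.
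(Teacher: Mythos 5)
Your proposal is correct and follows essentially the same route as the paper: multiply $f_t$ from Lemma~\ref{A2} by $k\approx n/exp(A)$ new variables evaluated at $1\in A^\#$, enlarge the alternating sets so that the resulting polynomial generates the module with character $\chi_\mu$, where $D_\mu$ is $D_{\lambda^{(n)}}$ with an extra row of length $k$, invoke Lemma~\ref{L3} to get $\Phi(\mu)\to exp(A)+1$ at this optimal $k$, and finish with Lemma~\ref{L1}, the monotonicity of $c_n(A^\#)$ and the boundedness of $N(t+1)-N(t)$ (the paper does exactly this, allowing the new row to sit in position $1$, $d+1$ or $d+2$, while you fix position $d+1$, which is indeed where the optimum lies). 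The one slip is your justification of $\lambda_1\ge s\ge td$: the claim $a\le d$ is false in general (for $d=1$, $m=2$, $\alpha=0$ one has $a=2$), but the inequalities you actually need do hold, since $a=\Phi_d\bigl(\frac{1}{m+\alpha}\bigr)\le d+1\le m+\alpha$ gives $s\gtrsim td$, and $a\ge\theta^{-d\theta}\theta^{-1/(m+\alpha)}=1/\theta$ (with $d\theta+\frac{1}{m+\alpha}=1$) gives $\lambda_1\gtrsim s$.
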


\begin{proof}
При доказательстве леммы \ref{A2} для любого $\delta >0$ была выбрана возрастающая последовательность $n=n(t), t=t_0,t_0+1,\ldots$, семейство разбиений 
$\lambda^{(n)}\vdash n(t)$ и набор полиномов $f_t, t\ge t_0$, со следующими совйствами:
\begin{itemize}
\item
разбиение $\lambda$ имеет вид $\lambda=(\lambda_1,\ldots,\lambda_{d+2})$, 
$\lambda_1=\ldots= \lambda_d=m_1+\cdots+m_t-t, \lambda_{d+1}=td, \lambda_{d+2}=1$,
\item
$\Phi(\lambda^{(n)})\ge \Phi_d(\frac{1}{m+\alpha}-\delta)$, где $\alpha$ --- наклон $w$,
\item
$n(t+1)-n(t) \le d(m+1)$ для всех $t\ge t_0$,
\item
симметризация $f_t$ не является тождеством $A$  и порождает неприводимый $F[S_n]$-модуль
с характером $\chi_\lambda$,
\item
$f_t$ кососимметричен по $\lambda_1$ наборам переменных: один размера $d+2$, $td-1$ --- размера
$d+1$ и $\lambda_1-\lambda_{d+1}$ --- размера $d$.
\end{itemize}
Кроме того, $exp(A)=\Phi_d(\frac{1}{m+\alpha})$.

Обозначим через $\widetilde h_{t,k}$ произведение
$$
\widetilde h_{t,k}=f_tz_1\ldots z_k, \, k \ge 1.
$$
Рассмотрим ту же подстановку $\varphi$, которая давала ненулевое значенин для $f_t$
и $Sym\, f_t$ и расширим ее действие на $\widetilde h_{t,k}$, положив
$\varphi(z_1)=\ldots=\varphi(z_k)=1$. Тогда, очевидно,
$$
\varphi(\widetilde h_{t,k})=\varphi(f_t)\ne 0.
$$
Более того, если $k\le td$, то мы можем включить $z_1,\ldots,z_k$ в первые $k$ кососимметричных
набора у $_t$ и провести дополнительное альтернирование по расширенным наборам. При этом из 
правил умножения базисных элементов $A$ следует, что
$$
\varphi(Alt(\widetilde h_{t,k}))=\gamma\varphi(\widetilde h_{t,k}),
$$
где $\gamma$ --- ненулевой целочисленный коэффициент. У полинома $f_{t,k}=
Alt(\widetilde h_{t,k})$ переменнные тоже распределены по $\lambda_1$ кососимметричным
наборам: один размера $d+3$, $k-1$ --- размера $d+2$ и $\lambda_1-k$ --- размера $d+1$.
Более того, если провести его симметризацию по тем же переменным, что и для $f_t$ плюс
симметризацию по $z_1,\ldots,z_k$, то значение $\varphi(Sym\,(f_{t,k})$ тоже пропорциоонально
$\varphi(f_{t})$  с ненулевым коэффициентом. То есть полином $Sym\,(f_{t,k}$ порождает
неприводимый $F[S_{n+k}]$-модуль с характером $\chi_\mu$, где
$$
\mu = (\mu _1,\ldots,\mu_{d+3}), \mu_1=\lambda_1,\ldots, \mu_d=\lambda_d,
\mu_{d+1}=\lambda_{d+1}, \mu_{d+2}=k, \mu_{d+3}=1.
$$
Аналогично доказывается, что все разбиения вида
$$
\mu=(\lambda_1,\ldots,\lambda_d,k,\lambda_{d+1},1),\,
\mu=(k,\lambda_1,\ldots,\lambda_{d+2})
$$
имеют ненулевые кратности в характере $\chi_{n+k}(A^\#)$. Другими словами, мы можем добавить 
к диаграмме $D_\lambda$ любую строку (1-ю, $d+1$-ю либо $d+2$-ю) и получить диаграмму 
$D_\mu$, соответствующую разбиению $\mu\vdash n+k$ с ненулевой кратностью.

Оценим снизу максимальное значение $\Phi(\mu)$ и $k$. Обозначим 
$$\frac{\lambda_1}{n}=u_1,
\ldots,\frac{\lambda_{d+2}}{n}=u_{d+2}, \beta=\Phi(\lambda).$$
 Тогда по лемме \ref{L3}
\begin{equation}\label{n0}
\Phi\left(\theta u_1,\ldots,\theta u_{d+2}.1-\theta\right)=1+\Phi(\lambda)
\end{equation}
--- максимальное значение, которое может принимать $\Phi(\mu)$, где $\theta=
\frac{\beta}{\beta+1}$. Это означает, что если $k$ удовлетворяет двум неравенствам
\begin{equation}\label{n1}
\frac{k}{k+1}\le 1-\theta=\frac{1}{\beta+1}\le\frac{k+1}{n+k+1},
\end{equation}
то максимум $\Phi(\mu)$ достигается либо при этом $k$, либо при $k+1$. Соотношение (\ref{n1})
\begin{equation}\label{n2}
\frac{n}{\beta}-1\le k \le \frac{n}{\beta}.
\end{equation}

Напомним, что $n$ и $k$ зависят от $t: n=n(t), k=k(t)$. Учитывая (\ref{n2}) и выбор $n(t)$, мы получаем
\begin{equation}\label{n3}
n(t+1)+k(t+1)-n(t)-k(t) \le\frac{\beta}{\beta+1} d(m+1)+2.
\end{equation}

Обозначим $r=r(t)=n(t)+k(t)$, а через $\mu^{(r)}$ --- разбиение $r(t)$ с максимальным
значением $\Phi(\mu^{(r)})$. так как с ростом $n$ величину $\frac{1}{\beta+1}$ все более
точно аппроксимировать дробью $\frac{k}{n+k}$, то можно с учетом (\ref{n0}) считать, что
$$
\Phi(\mu^{(r)}) \ge \Phi(\lambda^{(n)})+1-\delta'
$$
при всех достаточно больших $n$, где $\delta' >0$ --- любая заранее заданная величина,
$n=n(t), r=r(t)$. Тогда с учетом леммы \ref{L1} мы имеем
\begin{equation}\label{n4}
c_{r(t)}(A^\#) \ge \frac{\Phi\left(\mu^{(r(t))} \right)^n }{n^{(d+2)^2+d+3}} 
\ge  \frac{\left(\Phi(\lambda^{(n)})+1-\delta' \right)^n}{n^{(d+2)^2+d+3}} \ge 
\frac{\left(\Phi_d(\frac{1}{m+\alpha}-\delta)+1-\delta' \right)^n}{n^{(d+2)^2+d+3}}.
\end{equation}

Поскольку все разности $r(t+1)-r(t)$ ограничены общей константой (см. (\ref{n4})), а
последовательность $\{c_n(A^\#)\}$ --- неубывающая, то из (\ref{n4}) следует, что
$$
\underline{\lim}_{n\to\infty}\sqrt[n]{c_n(A^\#)} \ge 
\Phi_d(\frac{1}{m+\alpha}-\delta)+1-\delta'.
$$
Наконец, так как $\delta$ и $\delta'$ --- произвольные сколь угодно малые величины, мы полычаем
$$
\underline{exp}(A^\#) \ge exp(A)+1,
$$
и лемма доказана.

\end{proof}

\subsection{}\label{s4.2}
Теперь мы получим верхнюю оценку на $\underline{exp}(A)$.

\begin{lemma}\label{LL4}
$$
\underline{exp}(A^\#) \le exp(A)+1.
$$
\end{lemma}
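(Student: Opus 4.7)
My plan is to transfer the upper-bound argument of Lemma \ref{A1} to $A^\#$, using Lemma \ref{LL2} to control the colength and Lemma \ref{L3} to absorb the effect of the adjoined unit. First, I combine $c_n(A^\#)\le l_n(A^\#)\cdot\max_\lambda d_\lambda$ with Lemma \ref{LL2} (which yields the polynomial bound $l_n(A^\#)\le (n+1)^{3(d+3)^2}$) and Lemma \ref{L1} ($d_\lambda\le n\Phi(\lambda)^n$). Taking $n$-th roots and sending $n\to\infty$ reduces the inequality $\overline{exp}(A^\#)\le exp(A)+1$ to
$$\limsup_{n\to\infty}\max_{\lambda}\Phi(\lambda)\le \Phi_d\!\left(\frac{1}{m+\alpha}\right)+1,$$
the maximum being over $\lambda\vdash n$ with $m_\lambda\ne 0$ in $\chi_n(A^\#)$; by Lemma \ref{LL2} every such $\lambda$ also satisfies $h(\lambda)\le d+3$.

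Next, I would upgrade Lemma \ref{A0} to $A^\#$: for any multilinear non-identity $f$ of $A^\#$ of degree $n$,
$$\frac{\deg_b f}{n}\le\frac{1}{m+\alpha}+\varepsilon_n',\qquad \varepsilon_n'\to 0.$$
Writing each $A^\#$-substitution as $v_i+\gamma_i\cdot 1$ with $v_i\in A$, $\gamma_i\in F$, and expanding by multilinearity, some summand in which $s$ variables are specialized to scalars yields a non-identity of $A$ in $n-s$ variables. Lemma \ref{A0} applied to this residual polynomial controls its $b$-degree, and since scalars contribute nothing to $\deg_b$ the bound transfers to $f$.

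The heart of the argument is to translate this estimate into a constraint on the shape of $\lambda$. Given $\lambda$ with $m_\lambda\ne 0$ and $h(\lambda)=d+3$, Remark \ref{r1} produces a non-identity $f$ of $A^\#$ alternating on $\lambda_1$ disjoint subsets of variables whose sizes equal the column heights of $D_\lambda$. Two scalar substitutions in the same alternating column commute and antisymmetrize to $0$, so each column admits at most one scalar slot; a careful count, analogous to the one carried out for $A$ in Lemma \ref{A1} but shifted by one index because of the extra slot supplied by the unit, forces at least $\lambda_{d+2}$ alternating columns to carry a $b$-substitution. Combined with the previous paragraph, this yields
$$\frac{\lambda_{d+2}}{n-\lambda_{d+3}}\le\frac{1}{m+\alpha}+\varepsilon_n''.$$

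Finally, I invoke Lemma \ref{L3}. Setting $\mu=(\lambda_1,\ldots,\lambda_{d+2})\vdash n'=n-\lambda_{d+3}$, $t=n'/n$, and $z_i=\lambda_i/n'$, Lemma \ref{L3} gives
$$\Phi(\lambda)=\Phi(tz_1,\ldots,tz_{d+2},1-t)\le \Phi(\mu)+1.$$
The partition $\mu$ satisfies $h(\mu)\le d+2$ and $\mu_{d+2}/n'\le 1/(m+\alpha)+o(1)$, so the same maximization argument performed in Lemma \ref{A1} gives $\Phi(\mu)\le\Phi_d(1/(m+\alpha))+\varepsilon$ for any fixed $\varepsilon>0$ and all large $n$. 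Thus $\Phi(\lambda)\le exp(A)+1+\varepsilon$, and letting $\varepsilon\to 0$ establishes the claim. I expect the main obstacle to be the combinatorial count in the third paragraph: one must verify that the additional alternation slot supplied by the unit of $A^\#$ shifts the forced $b$-substitution count from $\lambda_{d+1}$ (as was the case for $A$) precisely to $\lambda_{d+2}$, and that the corresponding antisymmetrized evaluation remains non-zero on the explicit basis $\{z^i_{jk},a_i,b,1\}$ of $A^\#$.
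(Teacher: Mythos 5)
Your reduction to bounding $\Phi(\lambda)$ for $\lambda$ with $m_\lambda\ne 0$ in $\chi_n(A^\#)$ (via Lemma \ref{LL2}, inequality (\ref{e2a}) and Lemma \ref{L1}) is exactly the paper's first step. But the way you then absorb the unit is where the argument breaks. You strip off only the $(d+3)$-rd row (of length $\lambda_{d+3}\le 1$), set $\mu=(\lambda_1,\dots,\lambda_{d+2})$, and apply Lemma \ref{L3} with $t=n'/n\approx 1$ to get $\Phi(\lambda)\le\Phi(\mu)+1$. This inequality is true but spends the entire ``$+1$'' on a negligible row: since $1-t\approx 0$, in fact $\Phi(\mu)\approx\Phi(\lambda)$. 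Your remaining task is then to show $\Phi(\mu)\le\Phi_d\bigl(\frac{1}{m+\alpha}\bigr)+\varepsilon$, and this is \emph{false}: $\mu$ has $d+2$ rows of which you have only constrained the last one, so the best the maximization can give is $\Phi_{d+1}\bigl(\frac{1}{m+\alpha}\bigr)$, which is strictly larger than $\Phi_d\bigl(\frac{1}{m+\alpha}\bigr)$ (more free coordinates means a larger maximum of $\Phi$). Concretely, for the extremal partitions realizing $exp(A^\#)=exp(A)+1$ (the ones produced in Lemma \ref{LL3}, obtained from the extremal shape of $A$ by the Lemma \ref{L3} construction), the unit contributes a row of length about $n/(exp(A)+1)$ sitting in position $d+1$, so $\lambda_{d+1}/n$ is \emph{not} close to $\frac{1}{m+\alpha}$, $\Phi(\mu)\approx exp(A)+1$, and your claimed intermediate bound fails. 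Your chain of inequalities therefore only yields $\overline{exp}(A^\#)\le\Phi_{d+1}\bigl(\frac{1}{m+\alpha}\bigr)+1>exp(A)+1$.

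The missing idea is that the unit substitutions must be counted and removed before Lemma \ref{L3} is invoked. The paper fixes a nonzero evaluation $\varphi(h)=f$ in which $n_1$ variables (possibly a positive fraction of $n$) are sent to $1$, observes that every alternating set of size at least $d+1$ must receive an element of $\{1,b\}$, splits the $\lambda_{d+1}$ such sets according to whether they receive a $b$ or a $1$, rearranges rows $d+1,\dots,d+3$ accordingly, and \emph{discards the rows accounted for by units}, obtaining a partition $\mu\vdash n''=n-r-t$ with $h(\mu)\le d+1$ and $\mu_{d+1}/n''\le\deg_b f/n'$ with $n'=n-n_1$; only then do Lemmas \ref{L2} and \ref{L3} give $\Phi(\lambda)\le\Phi(\mu)+1\le\Phi_d\bigl(\frac{1}{m+\alpha}+\varepsilon\bigr)+1$. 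A secondary gap in your sketch: your ``upgraded Lemma \ref{A0}'' has error term $\varepsilon_{n-s}$ where $s$ is the number of scalar slots, and $n-s$ need not tend to infinity; the paper handles this by a separate case $\lambda_1>\frac{q-1}{q}n$ (where $\Phi(\lambda)<d$ trivially), which guarantees $n'\ge n/q\to\infty$ in the remaining case. Your proposal does not address either point, so as written it does not prove the lemma.
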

 \begin{proof}

Так как кодлина $l_n(A^\#)$ полиномиально ограничена согласно лемме \ref{LL2}, то
достаточно доказать, что
$$
\Phi(\lambda) \le \Phi_d(\frac{1}{m+\alpha})+1=exp(A)+1,
$$
для любого $\lambda\vdash n$ с $m_\lambda\ne 0$ в $\chi_n(A)$ как показывает соотношение
(\ref{e2a}).

Пусть $h=h(x_1,\ldots,x_n)$ --- полилинейный многочлен, не являющийся тождеством $A^\#$,
порождающий в $P_n$ неприводимый $F[S_n]$-модуль с характером $\chi_\lambda$. Как отмечалось
ранее, можно считать $h$ кососимметричным по $\lambda_1$ наборам переменных, причем
$\lambda_{d+2}$ из них имеют размер не меньше $\lambda_{d+2}$. Если $\lambda_{d+2}=0$, то
$$
\Phi(\lambda)\le \Phi(\underbrace{\frac{1}{d+1},\ldots,\frac{1}{d+1}}_{d+1},0,0)
= d+1 <1+\Phi_d(\frac{1}{m+\alpha}).
$$

Пусть $\lambda_{d+2}\ne 0$. Зафиксируем произвольное $\varepsilon >0$. Поскольку
$h\not\in Id(A^\#)$, то существует подстановка $\varphi$ базисных элементов $A$ и 1 вместо
переменных $x_1,\ldots,x_n$, при которой
$$
\varphi(h)=f(z^i_{jk},a_1,\ldots, a_d,b)=f
$$
--- ненулевой одночлен степени $n'$ от $\{z^i_{jk},a_1,\ldots, a_d,b \} $, где $n'=n-n_1$, а
$n_1$ --- количество единиц из $A^\#$, подставленных вместо $x_1,\ldots,x_n$. Из структуры
$h$ следует, что $n_1 \ge\lambda_{d+2}$ и $\deg_b f \ge\lambda_{d+2}$. Если
$\lambda_{d+1}=\lambda_{d+2}$, то обозначим через $D_\mu$ диаграмму Юнга, полученную из $D_\lambda$ вычеркиванием $(d+1)$-й строки. Тогда $\mu$ --- разбиение числа
$n''=n-\lambda_{d+2}$ и $\mu_{d+2}\le 1, \mu_{d+1}=\lambda_{d+2}\ge \deg_b f= n'=n-n_1 
\le n-\lambda_{d+2}$. Тогда по лемме \ref{A0}
\begin{equation}\label{n5}
\frac{m_{d+2}}{n''} = \frac{m_{d+1}}{n-\lambda_{d+2}} \le \frac{\deg_b f}{n-\lambda_{d+2}}
\le \frac{\deg_b f}{n'} \le \frac{1}{m+\alpha}+\varepsilon_{n'}.
\end{equation}

Теперь покажем, что соотношение, аналогичное (\ref{n5}) можно получить и при 
$\lambda_{d+1}>\lambda_{d+2}$. В этом случае в полиноме $h$ кроме $\lambda_{d+2}$
кососимметричных наборов порядка $d+2$ есть еще $\lambda_{d+1}-\lambda_{d+2}$
кососимметричных наборов порядка $d+1$. В каждый из этих наборов подставлен один из элементов
$\{1,b\}$, поскольку $\varphi(h)\ne 0$. Пусть $b$ подставлен ровно в $k\le \lambda_{d+1}-\lambda_{d+2}$ из этих наборов. Тогда $deg_b f\ge r+k$, где $r=\lambda_{d+2}$.
Обозначим также $\lambda_{d+1}-k=t$. Тогда в $D_\lambda$ есть две строки длин
$t+k$ и $r$, причем $t\ge r$. Перебрасывая клетки из $(d+1)$-й строки $D_\lambda$ в 
$(d+2)$-ю  можно получить диаграмму $D_{\lambda'}$, у которой длины строк
$d+1,d+2$ равны $r+k$ и $t$. По лемме \ref{L2} мы имеем: $\Phi(\lambda') \ge 
\Phi(\lambda)$. Теперь через $D_\mu$ мы обозначим диаграмму $D_{\lambda'}$ с
вычеркнутой $(d+2)$-й строкой. Тогда снова $\mu_{d+1}=r+k\le \deg_b f$,
$\mu\vdash n'' \ge n'$, и мы снова получаем соотношение
\begin{equation}\label{n6}
\frac{\mu_{d+1}}{n''} \le \frac{\deg_b f}{n'} \le \frac{1}{m+\alpha}+\varepsilon_{n'}.
\end{equation}

Заметим сначала, что $\lambda_1 \ge n_1$ в силу кососимметричности $h$ по $\lambda_1$
наборам переменных. Обозначим $x=\lambda_1$. Тогда
$$
\Phi(\lambda) \le 
\Phi\left(x,\underbrace{\frac{1-x}{d+2},\ldots,\frac{1-x}{d+2} }_{d+2}\right)=H(x)
$$Предел функции $H(x)$ при $x\to\infty$ равен $1$. Это, в частности, означает, что
существует такое целое $q$, что если $n_1\ge\frac{q-1}{q}n$, то $\Phi(\lambda) <d$ для всех
достаточно больших $n\ge N$.

Разделим теперь все разбиения $\lambda\vdash n\ge N$ на две группы --- где 
$\lambda_1 >\frac{q-1}{q}N$ и где $\lambda_1 \le\frac{q-1}{q}N$. Для всех разбиений
первой группы неравенство
$$
\Phi(\lambda) < d < \Phi_d(\frac{1}{m+\alpha}+\varepsilon)
$$
выполняется в силу выбора $q$ и $n$. Для разбиений из второй группы воспользуемся соотношениями
(\ref{n5}) и (\ref{n6}). В первом случае диаграмма $D_\mu$ получена из $D_\lambda$
вычеркиванием одной строки. Поэтому по лемме \ref{L3} мы имеем 
$\Phi(\lambda)\le \Phi(\mu)+1$. Во втором случае $D_\mu$ получена из $D_{\lambda'}$ вычеркиванием строки, а $D_{\lambda'}$ получена из $D_{\lambda}$ переносом вниз
нескольких клеток. Поэтому по леммам \ref{L2} и \ref{L3}
$\Phi(\lambda) \le \Phi(\lambda') \le \Phi(\mu)+1$. В любом из случаев из \ref{n5},
\ref{n6} получаем
$$
\Phi(\lambda) \le  \Phi(\mu)+1 \le 
\Phi(\theta,\ldots,\theta,\frac{1}{m+\alpha}+\varepsilon_{n'},\frac{1}{n}),
$$
используя свойства $\Phi$, где
$$
(d+1)\theta+\frac{1}{m+\alpha}+\varepsilon_{n'}+\frac{1}{n}=1.
$$

Так как $\lambda_1 \le\frac{q-1}{q}n$, то $n'=n-n_1\ge n-\lambda_1\ge \frac{n}{q}$.
Поэтому $n'\to\infty$ с ростом $n$ и $\varepsilon_{n'}\to 0$. Как и в доказательстве леммы
\ref{A0}, получаем, что
$$
\Phi(\lambda) \le 1+\Phi(\theta',\ldots,\theta',\frac{1}{m+\alpha}+\varepsilon,0,0)=
1+\Phi_d(\frac{1}{m+\alpha}+\varepsilon)
$$
для всех достаточно больших $n$. Поскольку $\varepsilon >0$ выбрано произвольно, мы получаем
$$
\overline{exp}(A^\#) \le 1+\Phi_d(\frac{1}{m+\alpha})= exp(A)+1.
$$

 \end{proof}

Комбинация лемм \ref{LL3} и \ref{LL4} сразу дает следующий результат.

\begin{theorem}\label{t2}
Пусть $m$ и $d$ --- целые числа, $m\ge 2, m-1\ge d$, а $w$ --- бесконечное периодическое слово
или слово Штурма. Если $A=A(m,d,w)$ и $A^\#$ получена из $A$ присоединением  единицы, то
$exp(A^\#)$ существует, причем $exp(A^\#)=exp(A)+1$
\end{theorem}

\begin{corollary}\label{c1}
Для любого вещественного числа $\gamma\ge 2$ существует алгебра $A_\gamma$ с
единицей с PI-экспонентой $exp(A_\gamma)=\gamma$.
\end{corollary}

\begin{proof}
При заданном $d$ совокупность значений
$$
\left\lbrace \Phi_d(\frac{1}{m+\alpha})=exp(A(m,d,w)) \vert 0\le\alpha\le 1,
\quad m=d+1, d+2,\ldots\right\rbrace
$$
покрывает весь промежуток $(d,d+1]$. Следовательно, любое вещественное число $\gamma>2$
реализуется как экспонента $exp(A^\#)$, где $A=A(m,d,w)$ для подходящих $m,d$ и $w$.
Для $\gamma=2$ есть много реализаций даже в ассоциативном случае. Например, для бесконечномерной алгебры Грассмана $G$ с единицей $c_n(G)=2^{n-1}$ (\cite{KR} или
 \cite[теорема 4.1.8]{GZbook}). Поэтому $exp(G)=2$.

\end{proof}

Отдельный интерес представляет вопрос о множестве значений PI-экспонет конечномерных
алгебр. Ясно, что если поле $F$ счетно, то и это множество счетно. В работе \cite{GMZ}
показано, что множество $\{exp(A)|\dim A <\infty\}$ всюду плотно в $[1;\infty)$, а в работе
\cite{Z} доказано, что для конечномерной унитарной алгебры $A$ рост $\{c_n(A)\}$ либо
полиномиален, либо ограничен снизу показательной функцией $2^n$

Еще одним следствием теорем \ref{t1} и \ref{t2} является тот факт, что совокупность 
PI-экспонент конечномерных алгебр с единицей является всюду плотным подмножеством в области
 $[2;\infty)\subset \mathbb{R}$.
 
\begin{corollary}\label{c2}
Для любых вещественных $2\le\alpha <\beta$ существует конечномерная алгебра $B$, такакя, что
$$
\alpha \le exp(B) \le \beta.
$$
\end{corollary}

\begin{proof}
Рассмотрим алгебру $A(m,d,w)$, где $w$ --- бесконечное периодическое слово с периодом $T$,
и вместе с ней --- конечномерную алгебру $B=B(m,d,w)$  с базисом
$$
\left\lbrace a_1,\ldots,a_d,b, z^i_{jk}\vert
1\le i \le d,\,  1\le j \le m+w_j,\, 1\le k \le T \right\rbrace
$$
и таблицей умножения
$$
z^i_{jk} a_i=
\left\{
  \begin{array}{rcl}
     z^i_{j+1,k}, &\quad \hbox{если} \quad & j<m+w_k  \\
    0, &\quad \hbox{если} \quad & j=m+w_k\, ,
           \end{array}
\right.
$$ 
$$
z^i_{m+w_k,k} b=
\left\{
  \begin{array}{rcl}
     z^{i+1}_{1k}, &\quad \hbox{если} \quad & i<d  \\
    z_{1,k+1}^1, &\quad \hbox{если} \quad & i=d,\, k<T \\
    z_{11}^1, &\quad \hbox{если} \quad & i=d,\, k=T\, .
           \end{array}
\right.
$$ 
Легко заметить, что алгебры $A(m,d,w)$ и $B(m,d,w)$ PI-эквивалентны, т.е. имеют одни и те 
же тождества. Но тогда и алгебры $A(m,d,w)^\#$ и $B(m,d,w)^\#$ тоже PI-эквивалентны. В частности, $exp(A(m,d,w)^\#)=exp(B(m,d,w)^\#)$. В частности, $exp(B(m,d,w)^\#) = 
exp(A(m,d,w))+1$.

По предложению \ref{p1} для любого рационального $q\in(0;1)$ существует периодическое слово 
$w$ с наклоном $\pi(w)=q$. Но тогда
$$
exp(B(m,d,w)^\#)=\Phi_d(\frac{1}{m+q})+1
$$
в силу теоремы \ref{t2}. Поэтому можно подобрать такое рациональное положительное $q < 1$,
что $\alpha \le exp(B(m,d,w)^\#) \le \beta$.

\end{proof}

\end{fulltext}

% \end{document}

\end{document}